\newtheorem{theorem}{Theorem}
\newtheorem{corollary}{Corollary}
\def\blfootnote{\xdef\@thefnmark{}\@footnotetext}\makeatother
\title[Additive Energy and Irregularities of Distribution]{Additive Energy and Irregularities of Distribution}
\author{Christoph Aistleitner} 
\address{ Institute of Financial Mathematics and applied Number Theory, University Linz}
\email{christoph.aistleitner@jku.at}
\author{Gerhard Larcher} 
\address{ Institute of Financial Mathematics and applied Number Theory, University Linz}
\email{gerhard.larcher@jku.at}
\thanks{The first author is supported by the Austrian Science Fund (FWF) project I1751-N26. Both authors are supported by the FWF project F5507-N26, which is part of the Special Research Program ``Quasi-Monte Carlo Methods: Theory and Applications''}
\begin{document}

\begin{abstract}
We consider strictly increasing sequences $\left(a_{n}\right)_{n \geq 1}$ of integers and sequences of fractional parts $\left(\left\{a_{n} \alpha\right\}\right)_{n \geq 1}$ where $\alpha \in \mathbb{R}$. We show that a small additive energy of $\left(a_{n}\right)_{n \geq 1}$ implies that for almost all $\alpha$ the sequence $\left(\left\{a_{n} \alpha\right\}\right)_{n \geq 1}$ has large discrepancy. We prove a general result, provide various examples, and show that the converse assertion is not necessarily true.
\end{abstract}

\date{}
\maketitle

\section{Introduction and statement of results} \label{sect_1}

Let $\left(a_{n}\right)_{n \geq 1}$ be a strictly increasing sequence of positive integers. We are interested in distribution properties of the sequence $\left(\left\{a_{n} \alpha \right\}\right)_{n \geq 1}$, where $\alpha$ is a given real and $\left\{x\right\}$ denotes the fractional part of $x$. In particular we are interested in the behavior of the star-discrepancy $D_{N}^{*}$ of these sequences from a metrical point of view. The star-discrepancy $D_{N}^{*}$ of the first $N$ elements of a sequence $\left(x_{n}\right)_{n \geq 1}$ in $\left[\left. 0,1\right.\right)$ is defined by
$$
D_{N}^{*}(x_1, \dots, x_N) := \underset{0 < \beta \leq 1}{\sup} \left|\frac{A_{N} \left(\beta\right)}{N} - \beta \right|,
$$
where $A_{N}\left(\beta\right):= \# \left\{1 \leq n \leq N \left| \right. x_{n} \in \left[\left. 0,\beta\right.\right)\right\}$.
The sequence $\left(x_{n}\right)_{n \geq 1}$ is uniformly distributed in $\left[0,1\right)$ if and only if $\underset{N \rightarrow \infty}{\lim} D_{N}^{*} = 0$.\\

There exists a vast literature on the discrepancy of sequences of the form $\left(\left\{a_{n} \alpha\right\}\right)_{n \geq 1}$. The most basic and classical example for such a class of sequences are the Kronecker sequences $\left(\left\{n \alpha \right\}\right)_{n \geq 1}$. It is well known that for the discrepancy of the Kronecker sequence for almost all $\alpha$ we have
$$
ND_{N}^{*} = \mathcal{O}\left(\left(\log N\right)^{1+ \varepsilon}\right)
$$
for all $ \varepsilon > 0$, which is close to optimality since by a classical result of W.M. Schmidt the discrepancy of every infinite sequence satisfies
$$
ND_{N}^{*} = \Omega\left(\log N\right).
$$
For sequences of the form $\left(\left\{a_{n} \alpha \right\}\right)_{n \geq 1}$, R. C. Baker~\cite{Baker} has shown the following general metric result. Let $\left(a_{n}\right)_{n \geq 0}$ be a strictly increasing sequence of integers. Then for almost all $\alpha$ for the discrepancy of $\left(\left\{a_{n} \alpha\right\}\right)_{n \geq 1}$ we have 
$$
ND_{N}^{*} = \mathcal{O} \left(N^{\frac{1}{2}} \left(\log N\right)^{\frac{3}{2}+\varepsilon}\right)
$$
for all $\varepsilon > 0$. This result is best possible, up to logarithmic factors, since for example for lacunary sequences $\left(a_{n}\right)_{n \geq 1}$, that is for sequences satisfying $\frac{a_{n+1}}{a_{n}} \geq 1 + \delta$ with a $\delta > 0$, we have 
$$
ND_{N}^{*} = \Omega \left(N^{\frac{1}{2}}\right).
$$
Indeed in this case much sharper results are known -- see for example~\cite{Fuku} or~\cite{Philipp}. For general sequences $\left(a_{n}\right)_{n \geq 1}$ which grow neither linearly nor exponentially it is usually very hard to find the correct metric order of the discrepancy of $\left(\left\{a_{n} \alpha\right\}\right)_{n \geq 1}$, and in particular only very few metric lower bounds are known. There are some notable exceptions, such as for example~\cite{bp} and~\cite{bpt}, but they usually are either very restrictive or depend on a strong arithmetic structure and deep number-theoretic tools. In~\cite{ALMathe} the authors of the present paper developed a new, fairly general method by which one can obtain metric lower bounds for the discrepancy of sequences of the form $\left(\left\{a_{n} \alpha\right\}\right)_{n \geq 1}$. Amongst other results, for example the following was shown there (Corollary 1 in~\cite{ALMathe}). Let $P \in \mathbb{Z} \left[x\right]$ be a polynomial of degree $d \geq 2$. Then for the discrepancy of the sequence $\left(\left\{P(n) \alpha\right\}\right)_{n \geq 1}$ we 
have for almost all $\alpha$
$$
ND_{N}^{*} = \Omega \left(N^{\frac{1}{2}-\varepsilon}\right)
$$
for all $\varepsilon > 0$. Together with the general upper bounds of Baker this means that for these sequences we have the essentially largest possible metric order of $D_{N}^{*}$, namely $ND_{N}^{*} \approx N^{\frac{1}{2}}$. At this point one might assume that for any choice of $\left(a_{n}\right)_{n \geq 1}$ the discrepancy of $\left(\left\{a_{n} \alpha\right\}\right)_{n \geq 1}$ for almost all $\alpha$ either satisfies $ND_{N}^{*} = \mathcal{O} \left(N^{\varepsilon}\right)$ or $ND_{N}^{*} = \Omega \left(N^{\frac{1}{2}-\varepsilon}\right)$. This, however, is not true: in~\cite{ALMonats} it was shown that any asymptotic order for $ND_{N}^{*}$ between $N^{\varepsilon}$ and $N^{\frac{1}{2}}$ is possible for almost all $\alpha$. More precisely, Theorem~1 in~\cite{ALMonats} states the following. Let $0 < \gamma \leq \frac{1}{2}$ be given. Then there exists a strictly increasing sequence $\left(a_{n}\right)_{n \geq 1}$ of positive integers such that for the discrepancy of $\left(\left\{a_{n} \alpha\right\}\right)_{n \geq 
1}$ for almost all $\alpha$ we have $ND_{N}^{*} = \mathcal{O} \left(N^{\gamma}\right)$ and $ND_{N}^{*} = \Omega \left(N^{\gamma- \varepsilon}\right)$ for all $\varepsilon > 0$. An even more precise result has been recently obtained by Berkes, Fukuyama and Nishimura~\cite{fukuni}, by using a randomization technique.\\

The purpose of the present paper is to point out that the so-called additive energy of the sequence $\left(a_{n}\right)_{n \geq 1}$ of integers can give some information on the metric distribution behavior of $\left(\left\{a_{n} \alpha \right\}\right)_{n \geq 1}$. More precisely, we will show that an upper bound for the additive energy of $(a_n)_{n \geq 1}$ implies a lower bound for the metric discrepancy of $\left(\left\{a_{n} \alpha \right\}\right)_{n \geq 1}$, under an additional, relatively moderate, growth assumption on $(a_n)_{n \geq 1}$.  The additive energy $E(A)$ of finite sets $A$ of integers was studied very intensively in recent years, especially in connection with additive combinatorics (see for example~\cite{Tao}). For many classes of sequences, good upper bounds for the additive energy are known. Thus the link between additive energy and metric discrepancy allows us to identify many concrete interesting classes of integer sequences $\left(a_{n}\right)_{n \geq 1}$ for which we can give good lower bounds for the metric discrepancy of $\left(\left\{a_{n} \alpha \right\}\right)_{n \geq 1}$.\\

Let $A = \left\{b_{1}, \ldots, b_{N}\right\}$ be a finite sequence of integers. Then the additive energy $E(A)$ is defined by 
$$E(A) := \# \left\{\left(\left. x_{1}, x_{2}, x_{3}, x_{4}\right) \in A^{4} \right| x_{1} - x_{2} = x_{3} - x_{4}\right\},$$
i.e., $E(A)$ is the number of solutions of the equation $x_{1} - x_{2} = x_{3}-x_{4}$ with $x_{1}, x_{2}, x_{3}, x_{4} \in A$. If the elements $b_{1}, \ldots, b_{N}$ of $A$ are pairwise distinct then obviously we always have $N^{2} \leq E(A) \leq N^{3}$. In the present paper we will prove the following theorem, which allows us to deduce lower bounds for the metric discrepancy from upper bounds for the additive energy.

\begin{theorem} \label{th_a}
Let $\left(a_{n}\right)_{n \geq 1}$ be a strictly increasing sequence of positive integers with $a_{n} \ll e^{\gamma \left(\log n\right)^{2}}$ for some $\gamma > 0$. Assume furthermore that $E\left(\left\{a_{1}, \ldots, a_{N}\right\}\right) \ll N^{\kappa}$ for some $\kappa \in [2,3]$. Then for the discrepancy of the sequence $\left(\left\{a_{n} \alpha \right\}\right)_{n \geq 1}$ for almost all $\alpha$ we have
$$
ND_{N}^{*} = \Omega \left(N^{\frac{3-\kappa}{2}-\varepsilon}\right)
$$
for all $\varepsilon > 0$.
\end{theorem}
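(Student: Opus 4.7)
The plan is to reduce the discrepancy lower bound to an $|S_N|$-lower bound via Koksma's inequality, to exploit the additive-energy hypothesis through $L^p$-moment estimates for the exponential sum $S_N(\alpha) := \sum_{n=1}^N e^{2\pi i a_n \alpha}$, and finally to lift a positive-measure conclusion to an almost-everywhere statement along a suitable subsequence, where the growth condition $a_n \ll e^{\gamma(\log n)^2}$ will play a crucial role.

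\paragraph{Moments and $L^1$ lower bound.}
Koksma's inequality applied to $f(x) = e^{2\pi i x}$ yields $|S_N(\alpha)| \leq 2\pi \, N D_N^*(\alpha)$, so it suffices to show $|S_N(\alpha)| \gg N^{(3-\kappa)/2 - \ve}$ for almost every $\alpha$ and infinitely many $N$. Orthogonality of characters immediately yields $\int_0^1 |S_N|^2 \, d\alpha = N$, while by the very definition of additive energy, $\int_0^1 |S_N|^4 \, d\alpha = E(\{a_1,\dots,a_N\}) \ll N^\kappa$. The crucial step is the $L^1$-lower bound that comes from H\"older's interpolation $\|f\|_2 \leq \|f\|_1^{1/3} \|f\|_4^{2/3}$ applied to $f = |S_N|$, which gives
$$\|S_N\|_1 \geq \frac{\|S_N\|_2^{3}}{\|S_N\|_4^{2}} \gg \frac{N^{3/2}}{N^{\kappa/2}} = N^{(3-\kappa)/2}.$$
Combining this with the $L^2$-moment via the Paley--Zygmund inequality produces a measurable set $B_N \subset [0,1]$ of Lebesgue measure $|B_N| \gg N^{2-\kappa}$ on which $|S_N(\alpha)| \gg N^{(3-\kappa)/2}$.

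\paragraph{From positive measure to almost everywhere.}
When $\kappa = 2$ the measure $|B_N|$ is bounded below by a positive constant, and the event ``$\alpha \in B_N$ infinitely often'' is invariant under the integer shift $\alpha \mapsto \alpha + 1$ (because the $a_n$ are integers), hence has measure $0$ or $1$ by ergodicity; reverse Fatou then forces measure $1$. The genuinely difficult case is $\kappa > 2$, where $|B_N| \to 0$. Here I would pass to a strictly increasing subsequence $(N_k)_{k \geq 1}$ chosen so that $\sum_k |B_{N_k}| = \infty$ and so that the pairwise intersection measures $|B_{N_k} \cap B_{N_j}|$ are sufficiently small for a quantitative divergent Borel--Cantelli lemma (Kochen--Stone / Chung--Erd\H os) to give $|\limsup_k B_{N_k}| = 1$. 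The intersection estimate reduces to controlling the cross-energy
$$\int_0^1 |S_{N_k}(\alpha)|^2 |S_{N_j}(\alpha)|^2 \, d\alpha = \#\{(i,i',l,l'):\ a_i - a_{i'} = a_{l'} - a_l,\ i,i' \leq N_k,\ l,l' \leq N_j\};$$
the naive Cauchy--Schwarz bound $\ll (N_k N_j)^{\kappa/2}$ is just barely insufficient to close the Kochen--Stone ratio when $\kappa > 2$.

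\paragraph{Main obstacle.}
Sharpening this correlation estimate is the heart of the proof, and this is precisely where the growth hypothesis $a_n \ll e^{\gamma(\log n)^2}$ becomes indispensable: it permits one to choose the subsequence $(N_k)$ with just enough spacing that the two frequency sets $\{a_n : n \leq N_k\}$ and $\{a_n : N_k < n \leq N_j\}$ lie on essentially decoupled scales, yielding a product-type sharpening of the cross-energy bound that closes the divergent Borel--Cantelli ratio. The $N^\ve$-loss in the final exponent absorbs the polynomial/logarithmic overhead introduced by this subsequence construction, and the outcome is $|S_{N_k}(\alpha)| \gg N_k^{(3-\kappa)/2 - \ve}$ for almost every $\alpha$ and infinitely many $k$, which via Koksma translates to the stated lower bound $N_k D_{N_k}^*(\alpha) \gg N_k^{(3-\kappa)/2 - \ve}$.
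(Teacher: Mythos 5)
Your first step coincides with the paper's: the identity $\int_0^1|S_N|^4\,d\alpha = E(\{a_1,\dots,a_N\})$ together with the H\"older interpolation $\|S_N\|_2^3 \le \|S_N\|_1\,\|S_N\|_4^2$ gives $\|S_N\|_1 \gg N^{(3-\kappa)/2}$, which is exactly the bound \eqref{equ_a} (attributed there to Karatsuba). The gap is in everything after that. The paper does not run a divergent Borel--Cantelli argument on the superlevel sets $B_N$; it invokes Theorem~3 of \cite{ALMathe}, whose proof rests on a covering/hitting statement (Theorem~4 of \cite{ALMathe}): for sets $R_L$ of measure $\ge B^{-L}$ that are unions of at most exponentially many intervals, almost every $\alpha$ admits infinitely many integers $h_L \le (1+\eta)^L/\mathbb{P}(R_L)$ with $\{h_L\alpha\}\in R_L$. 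The entire content of the present paper's proof is the observation that this machinery still works when ``union of at most $A^{L}$ intervals'' is relaxed to ``at most $A^{L^2}$ intervals,'' and this is precisely what the growth hypothesis $a_n\ll e^{\gamma(\log n)^2}$ buys: it bounds the Lipschitz norm of the relevant exponential sum $f_L$, hence the number of intervals making up the level sets that play the role of $R_L$.

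Your substitute for this step does not close. For $\kappa=2$ the zero--one law you invoke is vacuous: translation by an integer is the identity map on $[0,1)$, so every set is invariant under it and no ergodicity is available; the limsup event is not obviously invariant under any ergodic transformation, which is exactly why the paper routes the argument through the covering lemma instead. For $\kappa>2$ you state yourself that the Kochen--Stone correlation estimate is the ``heart of the proof'' and leave it unproved; worse, the heuristic you offer for it is untenable, because $a_n\ll e^{\gamma(\log n)^2}$ is only an upper bound on growth and is compatible with, e.g., $a_n\le 2n$ (the Rudin--Shapiro integers of Theorem~\ref{th_d} satisfy it), in which case the frequency sets $\{a_n: n\le N_k\}$ and $\{a_n: N_k<n\le N_j\}$ live on the same scale and no decoupling of the cross-energy occurs. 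So the proposal correctly establishes the $L^1$ lower bound and a positive-measure statement, but not the almost-everywhere conclusion; the missing ingredient is the quantitative covering argument from \cite{ALMathe} (or a genuine replacement for it), and the role you assign to the growth hypothesis is not the one it actually plays.
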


Our result shows in particular that for $\left(a_{n}\right)_{n \geq 1}$ with smallest possible additive energy, i.e. in the case $\kappa=2$, the sequence $\left(\left\{a_{n} \alpha\right\}\right)_{n \geq 1}$ for almost all $\alpha$ has essentially the largest possible discrepancy. More precisely, the following corollary holds.

\begin{corollary} \label{co_a}
Let $\left(a_{n}\right)_{n \geq 1}$ be a strictly increasing sequence of positive integers for which $a_{n} \ll e^{\gamma \left(\log N\right)^{2}}$ for some $\gamma >0$ and
$$
E\left(\left\{a_{1}, \ldots, a_{N}\right\}\right) = \mathcal{O} \left(N^{2+ \varepsilon}\right) \qquad \text{for all} \qquad \varepsilon > 0.
$$
Then for the discrepancy of $\left(\left\{a_{n} \alpha \right\}\right)_{n \geq 1}$ for almost all $\alpha$ we have $ND_{N}^{*} = \mathcal {O} \left(N^{\frac{1}{2} + \varepsilon}\right)$ and $ND_{N}^{*} = \Omega \left(N^{\frac{1}{2} -\varepsilon}\right)$ for all $\varepsilon > 0$.
\end{corollary}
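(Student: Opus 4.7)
The plan is to obtain the corollary by combining two ingredients already available: Baker's general upper bound from the introduction for the upper estimate on $ND_N^*$, and Theorem~\ref{th_a} itself for the lower estimate, with $\kappa$ chosen just barely above $2$.

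For the upper bound, I would invoke Baker's theorem verbatim: since $(a_n)_{n \geq 1}$ is a strictly increasing sequence of positive integers, for almost all $\alpha$ we have $ND_N^* = \mathcal{O}(N^{1/2}(\log N)^{3/2+\varepsilon})$, and absorbing the logarithmic factor into $N^{\varepsilon}$ immediately yields $ND_N^* = \mathcal{O}(N^{1/2+\varepsilon})$ for every $\varepsilon > 0$. This half requires no new input; in particular the growth condition $a_n \ll e^{\gamma (\log n)^2}$ and the energy bound are not even needed here.

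For the lower bound, I would apply Theorem~\ref{th_a} with $\kappa = 2 + \delta$ for arbitrarily small $\delta > 0$. By hypothesis $E(\{a_1,\ldots,a_N\}) = \mathcal{O}(N^{2+\delta})$, so the assumption of Theorem~\ref{th_a} is satisfied for this $\kappa$, together with the growth condition $a_n \ll e^{\gamma (\log n)^2}$ which is assumed outright. Theorem~\ref{th_a} therefore yields, for almost all $\alpha$,
\[
ND_N^* = \Omega\!\left(N^{(3-\kappa)/2 - \eta}\right) = \Omega\!\left(N^{1/2 - \delta/2 - \eta}\right)
\]
for every $\eta > 0$. Given any $\varepsilon > 0$, I choose $\delta$ and $\eta$ with $\delta/2 + \eta < \varepsilon$ and obtain the desired $\Omega(N^{1/2 - \varepsilon})$.

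The only mild subtlety, and what I expect to be the main (though still routine) point to check, is the quantification of \emph{almost all $\alpha$}: Theorem~\ref{th_a} produces, for each fixed $(\kappa,\eta)$, an exceptional null set $E_{\kappa,\eta}$. To conclude that a single full-measure set of $\alpha$ works simultaneously for all $\varepsilon > 0$, I would pick a countable sequence $\varepsilon_k \downarrow 0$, associate to each $\varepsilon_k$ a suitable pair $(\kappa_k,\eta_k)$ as above, and take the union $\bigcup_k E_{\kappa_k,\eta_k}$, which is still a null set. Outside this null set the lower bound holds for every $\varepsilon_k$, and hence for every $\varepsilon > 0$. Combined with the analogous countable intersection argument for Baker's upper bound, this completes the corollary.
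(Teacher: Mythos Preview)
Your argument is correct and matches the paper's intended derivation: the corollary is stated immediately after Theorem~\ref{th_a} without a separate proof, as it is meant to follow directly from Theorem~\ref{th_a} (giving the $\Omega$-bound by taking $\kappa$ arbitrarily close to $2$) together with Baker's general metric upper bound quoted in the introduction (giving the $\mathcal{O}$-bound). Your treatment of the countable union of exceptional null sets is in fact more explicit than anything the paper spells out.
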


Our proof of Theorem~\ref{th_a} is essentially just a slight extension of the proof of Theorem~3 in~\cite{ALMathe}. However, as already noted, the fact that Theorem~\ref{th_a} in the present paper is formulated in the language of additive combinatorics allows us to use several pre-existing results on the additive energy of integer sequences to obtain lower metric bounds for the discrepancy of $\left(\left\{a_{n} \alpha \right\}\right)_{n \geq 1}$ for several interesting specific examples of sequences $\left(a_{n}\right)_{n \geq 1}$. So for example we can deduce the following results. 

\begin{theorem} \label{th_b}
Let $\left(a_{n}\right)_{n \geq 1}$ be a sequence of integers with $a_{n} \ll e^{\gamma \left(\log n\right)^{2}}$ for some $\gamma >0$ which is \emph{convex}, i.e., which satisfies
$$
a_{n+1} -a_{n} > a_{n}  -a_{n-1} \qquad \textrm{for} \qquad n \geq 2.
$$
Then for almost all $\alpha$ for the discrepancy of the sequence $\left(\left\{a_{n} \alpha\right\}\right)_{n \geq 1}$ we have
$$
ND_{N}^{*} = \Omega \left(N^{\frac{7}{26}-\varepsilon}\right)
$$
for all $\varepsilon > 0$.
\end{theorem}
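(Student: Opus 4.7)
The plan is to apply Theorem~\ref{th_a} as a black box, with the only additional input being a suitable upper bound on the additive energy of a convex integer sequence. To obtain the exponent $\frac{7}{26}$ in the discrepancy lower bound, I need to find $\kappa$ with $\frac{3-\kappa}{2} = \frac{7}{26}$, which gives $\kappa = \frac{32}{13}$. So the entire problem reduces to establishing
$$
E\bigl(\{a_{1},\dots,a_{N}\}\bigr) \ll N^{32/13 + o(1)}
$$
for any convex sequence of integers, since the required growth condition $a_{n} \ll e^{\gamma(\log n)^{2}}$ is already part of the hypothesis and any logarithmic losses in the energy bound are absorbed into the arbitrary $\varepsilon$ in Theorem~\ref{th_a}.

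The bound $E(A) \ll |A|^{32/13}(\log|A|)^{c}$ for a convex set $A \subset \mathbb{Z}$ is a known deep result in additive combinatorics, due to Schoen and Shkredov (building on earlier work of Konyagin, Garaev, Iosevich--Konyagin--Rudnev--Ten, and others). The proof of this estimate goes through the Plünnecke--Ruzsa machinery combined with the second-moment method on the representation function, and ultimately rests on the observation that convexity limits the number of solutions of additive equations via the Szemerédi--Trotter incidence bound applied to a system of lines or curves parametrising representations $b_{i} - b_{j} = s$. I would not reprove this here; I would simply cite it.

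With that input in hand, the proof of Theorem~\ref{th_b} collapses to a single line: apply Theorem~\ref{th_a} with $\kappa = \frac{32}{13}$ to the convex sequence $(a_{n})_{n\geq 1}$. The logarithmic losses in the Schoen--Shkredov bound translate into an arbitrarily small polynomial loss, so for every $\varepsilon > 0$ the hypothesis $E(\{a_{1},\dots,a_{N}\}) \ll N^{32/13 + \varepsilon/10}$ holds, which yields $ND_{N}^{*} = \Omega(N^{7/26 - \varepsilon})$ for almost all $\alpha$.

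The only ``obstacle'' in this proposal is therefore not internal to the argument but external: one must be willing to invoke the Schoen--Shkredov energy bound as a black box. If one insists on a self-contained proof, the hard part is reproducing that bound, which is genuinely nontrivial. Given the style of the paper (Theorem~\ref{th_b} is presented as an illustration of how Theorem~\ref{th_a} interfaces with additive combinatorics), a brief citation rather than a reproduction is clearly the intended route.
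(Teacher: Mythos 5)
Your proposal is correct and matches the paper's proof essentially verbatim: the paper also deduces Theorem~\ref{th_b} in one line by combining Theorem~\ref{th_a} with the bound $E(A) \ll N^{32/13}(\log N)^{71/65}$ for convex sets (cited there as Theorem~1 of Shkredov, \emph{Some new results on higher energies}), absorbing the logarithmic factor into the $\varepsilon$ exactly as you do. The only cosmetic difference is the attribution of the energy bound.
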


\begin{proof}[Proof of Theorem~\ref{th_b}]
The result follows immediately from Theorem~\ref{th_a} above and from Theorem~1 in~\cite{Shkredov}, which states that for every convex set $A$ of $N$ elements we have $E(A) \ll N^{\frac{32}{13}} \left(\log N\right)^{\frac{71}{65}}$.
\end{proof}

\begin{theorem} \label{th_c}
Let $a_{n} = \left\lfloor F(n)\right\rfloor$ where the real-valued function $F$ is three times continuously differentiable on $\left[\left.1,\infty\right.\right)$ and satisfies
$$
F'(x) > 0, \qquad F''(x) > 0, \qquad \text{and} \qquad  F'''(x) < 0
$$ 
on $\left[\left. 1, \infty\right.\right)$. Then for almost all $\alpha$ for the discrepancy of the sequence $\left(\left\{a_{n} \alpha\right\}\right)_{n \geq 1}$ we have
$$
ND_{N}^{*} = \Omega \left(N^{\min \left(\frac{1}{4}, \frac{1+\rho_{F}}{2}\right)-\varepsilon}\right)
$$
for all $\varepsilon > 0$, where $\rho_{F} := \underset{N \rightarrow \infty}{\lim \inf}~\frac{\log F''(N)}{\log N}$.
\end{theorem}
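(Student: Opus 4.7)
The plan is to apply Theorem~\ref{th_a}, reducing the problem to establishing the additive energy bound
\[
E(A_N) \;\ll\; N^{\max(5/2,\; 2 - \rho_F) + \varepsilon}, \qquad A_N := \{a_1, \ldots, a_N\},
\]
for every $\varepsilon > 0$; Theorem~\ref{th_a} then delivers $ND_N^* = \Omega(N^{(3-\kappa)/2 - \varepsilon}) = \Omega(N^{\min(1/4,\, (1+\rho_F)/2) - \varepsilon})$ for almost all $\alpha$. The growth hypothesis $a_n \ll e^{\gamma(\log n)^2}$ of Theorem~\ref{th_a} is immediate: $F'' > 0$ together with $F''' < 0$ force $F''$ to be bounded above by $F''(1)$, so $F(x) \ll x^2$ and $a_n \ll n^2$.

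To bound $E(A_N)$, I parametrize each contributing quadruple $(i, j, k, l)$ by the gap pair $d := i - j \geq 1$ and $e := k - l \geq 1$; setting $\psi_d(j) := F(j+d) - F(j)$, the equation $a_i - a_j = a_k - a_l$ becomes $\psi_d(j) = \psi_e(l) + O(1)$. Strict convexity of $F$ makes each $\psi_d$ strictly monotone in $j$, and the decreasing nature of $F''$ (from $F''' < 0$) yields the uniform slope bound
\[
\psi_d'(j) \;=\; \int_j^{j+d} F''(t)\, dt \;\geq\; d\, F''(N) \;\geq\; d\, N^{\rho_F - \varepsilon}
\]
for all $j + d \leq N$. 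Writing $\sigma := F''(N)$, this forces the multiplicity estimate $r_d(m) := \#\{j : a_{j+d} - a_j = m\} \ll 1 + (d\sigma)^{-1}$.

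Two complementary bounds on $E(A_N) = \sum_{d,e} \sum_m r_d(m) r_e(m)$ now arise. The first, from $\sum_m r_d(m) r_e(m) \leq R(d)(N-e)$ with $R(d) \ll 1 + (d\sigma)^{-1}$ followed by dyadic summation over $(d, e)$, contributes $\ll N^2(\log N)/\sigma \ll N^{2 - \rho_F + \varepsilon}$; this is sharp enough when $\rho_F \leq -1/2$. For the complementary regime $\rho_F > -1/2$, the geometric constraint is used: the equation $\psi_d(j) = \psi_e(l) + O(1)$ approximately forces $l \approx (F')^{-1}((d/e)\,F'(j))$, so for each triple $(d, e, j)$ there are only $O(1 + (e\sigma)^{-1})$ admissible $l$'s, and a careful two-dimensional summation over $(d, e, j)$ (subject to $j + d \leq N$ and $l + e \leq N$) produces the bound $\ll N^{5/2 + \varepsilon}$.

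The main obstacle is this last, sharp $N^{5/2 + \varepsilon}$ estimate in the regime $\rho_F > -1/2$: one must dyadically decompose both $d$ and $e$, match the contribution of the \emph{flat} part (where the multiplicity factor $(e\sigma)^{-1}$ is nontrivial) against the \emph{steep} part (where it is $O(1)$), and use the asymmetry of the geometric constraint between $d \leq e$ and $d > e$ to avoid overcounting the approximate solutions $l \asymp (F')^{-1}((d/e)F'(j))$. The hypothesis $F''' < 0$ is used repeatedly to guarantee that $F''$ does not oscillate, making all the bounds uniform over the ranges of summation. Combining the two regimes yields the desired energy bound, and Theorem~\ref{th_a} then produces the claimed discrepancy estimate.
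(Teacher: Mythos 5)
Your overall strategy is exactly the paper's: verify the growth hypothesis ($F''$ decreasing forces $F \ll x^2$, so $a_n \ll n^2$), establish $E(A_N) \ll N^{5/2} + N^2\log N/F''(N) \ll N^{\max(5/2,\,2-\rho_F)+\varepsilon}$, and feed $\kappa = \max(5/2,\,2-\rho_F)+\varepsilon$ into Theorem~\ref{th_a}; the exponent arithmetic $(3-\kappa)/2 = \min(1/4,\,(1+\rho_F)/2)$ is correct. The difference is that the paper obtains the energy bound by simply citing Corollary~2 of Garaev~\cite{Garaev}, whereas you attempt to prove it from scratch, and that attempt has two genuine gaps. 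First, your ``first bound'' is miscomputed: from $\sum_m r_d(m)r_e(m) \le R(d)(N-e)$ with $R(d) \ll 1+(d\sigma)^{-1}$, summation over $d,e \le N$ gives $\ll N^3 + N^2(\log N)/\sigma$, not $\ll N^2(\log N)/\sigma$ --- the ``$1$'' in $R(d)$ contributes a trivial $N^3$ term that you silently drop. That term is only dominated by $N^{2-\rho_F}$ when $\rho_F \le -1$, so in the range $-1 < \rho_F \le -1/2$ your first regime proves nothing, and you would still need the convexity ($N^{5/2}$-type) argument there.

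Second, and more seriously, the $N^{5/2+\varepsilon}$ estimate --- which you yourself identify as ``the main obstacle'' --- is never actually carried out. The passage about dyadically decomposing $d$ and $e$, exploiting the constraint $l \approx (F')^{-1}((d/e)F'(j))$, and ``a careful two-dimensional summation'' is a description of what a proof would have to do, not a proof; in particular you never justify why fixing $(d,e,j)$ leaves only $O(1+(e\sigma)^{-1})$ choices of $l$ \emph{and} why summing these counts over all $(d,e,j)$ with $j+d\le N$, $l+e\le N$ lands at $N^{5/2}$ rather than at the trivial $N^3$. This is precisely the nontrivial content of the convex-set energy bound (Konyagin/Garaev), and it cannot be waved through. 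The repair is easy: either invoke Garaev's Corollary~2 directly, as the paper does, or supply the full incidence/convexity argument for the $N^{5/2}$ bound and redo the flat-regime estimate so that the $N^3$ term is genuinely absent.
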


\begin{proof}[Proof of Theorem~\ref{th_c}]
The result follows immediately from Theorem~\ref{th_a} above and from Corollary~2 in~\cite{Garaev} which shows that
$$
E\left(\left\{a_{1}, \ldots, a_{N}\right\}\right) \ll N^{\frac{5}{2}} + \frac{N^{2} \log N}{F''(N)}
$$
for all $N$.
\end{proof}

From Theorems~\ref{th_b} and~\ref{th_c} we immediately obtain the following examples:

\begin{corollary} \label{co_b}
Let $a_{n} := \left\lfloor n^{c}\right\rfloor$ for some $c \in \left(1, \infty\right)$. Then for almost all $\alpha$ for the discrepancy of the sequence $\left(\left\{a_{n} \alpha\right\}\right)_{n \geq 1}$ we have
$$
ND_{N}^{*} = \Omega \left(N^{\tau-\varepsilon}\right)
$$
for all $\varepsilon > 0$, where

\[\tau=\begin{cases}
\frac{c-1}{2} & \text{if} \quad 1 < c < \frac{3}{2} \\
\frac{1}{4} & \text{if} \quad \frac{3}{2} \leq c < 2 \\
\frac{7}{26} &\text{if} \quad c \geq 2.
\end{cases}\]
\end{corollary}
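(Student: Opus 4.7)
The plan is to feed the sequence $a_n = \lfloor n^c \rfloor$ into Theorems~\ref{th_b} and~\ref{th_c}, splitting according to the range of $c$, and match up the resulting exponents with the piecewise definition of $\tau$. The growth hypothesis in both theorems is trivially satisfied, since $a_n \leq n^c = e^{c \log n} \ll e^{\gamma (\log n)^2}$ for every $\gamma > 0$, so this preliminary check can be dispatched in one line.

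For the range $1 < c < 2$, I would apply Theorem~\ref{th_c} with $F(x) = x^c$. One checks $F'(x) = cx^{c-1}$, $F''(x) = c(c-1)x^{c-2}$, and $F'''(x) = c(c-1)(c-2)x^{c-3}$, so the sign conditions $F' > 0$, $F'' > 0$, $F''' < 0$ hold on $[1,\infty)$ precisely in this range. The quantity $\rho_F$ is computed directly from $\log F''(N) = \log(c(c-1)) + (c-2)\log N$, giving $\rho_F = c-2$ and hence the exponent $\min(1/4, (c-1)/2)$. A case split on whether $(c-1)/2$ lies below or above $1/4$ then produces the first two lines of the piecewise definition of $\tau$, with the breakpoint at $c = 3/2$.

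For the range $c \geq 2$, Theorem~\ref{th_c} no longer applies since $F''' \geq 0$, so I would instead invoke Theorem~\ref{th_b}, whose hypothesis is that $(a_n)$ is strictly convex. The key step here is the (mild) arithmetic lemma that $\lfloor n^c \rfloor$ is convex for all sufficiently large $n$: Taylor-expanding yields
\[
(n+1)^c - 2n^c + (n-1)^c = c(c-1)\, n^{c-2} + O\!\left(n^{c-4}\right),
\]
which for $c \geq 2$ is bounded below by a positive constant (in fact equals $2$ exactly when $c=2$, and tends to infinity when $c > 2$), while the floor introduces a perturbation of absolute value at most $2$. Hence the second differences of $\lfloor n^c \rfloor$ are strictly positive from some index $n_0 = n_0(c)$ onwards, and Theorem~\ref{th_b} delivers $\Omega(N^{7/26 - \varepsilon})$.

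The only real nuisance is the initial non-convex segment for $c$ slightly above $2$, where $n^{c-2}$ is close to $1$ and the rounding can destroy convexity for small $n$. I would handle this either by replacing the first $n_0$ terms by values extrapolated from a strictly convex continuous function (which changes $E(\{a_1,\ldots,a_N\})$ by only $O(n_0 N^2)$, a lower-order term compared to $N^{32/13}$, and changes the discrepancy by only $O(n_0/N)$), or equivalently by applying Shkredov's bound to the convex tail $\{a_{n_0},\ldots,a_N\}$ and plugging the resulting energy bound directly into Theorem~\ref{th_a}. Either way the asymptotic lower bound is unaffected, and combining the three regimes yields exactly the piecewise formula for $\tau$.
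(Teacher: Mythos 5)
Your proposal is correct and matches the paper's (entirely implicit) argument: the paper simply states that Corollary~\ref{co_b} follows immediately from Theorems~\ref{th_b} and~\ref{th_c}, and you carry out exactly that, applying Theorem~\ref{th_c} with $F(x)=x^{c}$ and $\rho_{F}=c-2$ for $1<c<2$ and Theorem~\ref{th_b} via convexity of $\lfloor n^{c}\rfloor$ for $c\geq 2$. Your extra care about a possible initial non-convex segment caused by the floor function is a legitimate point the paper glosses over, and your fix (modify finitely many terms, or bound the energy of the tail) is sound.
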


\begin{corollary} \label{co_c}
Let $a_{n} := \left\lfloor e^{\gamma \left(\log N \right)^{\beta}}\right\rfloor$ for some $\gamma >0$ and $\beta$ with $1 < \beta \leq 2$. Then for almost all $\alpha$ for the discrepancy of $\left(\left\{a_{n} \alpha\right\}\right)_{n \geq 1}$ we have
$$
ND_{N}^{*} = \Omega \left(N^{\frac{7}{26}-\varepsilon}\right)
$$
for all $\varepsilon > 0$.
\end{corollary}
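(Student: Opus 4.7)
\textbf{Proof plan for Corollary~\ref{co_c}.} The natural strategy is to reduce to Theorem~\ref{th_b}, since the target exponent $7/26$ matches exactly. Setting $F(x) = e^{\gamma (\log x)^{\beta}}$ and $a_n = \lfloor F(n) \rfloor$ (reading the ``$\log N$'' in the statement as a typo for $\log n$), I would verify the hypotheses of Theorem~\ref{th_b}: that $(a_n)$ is a strictly increasing integer sequence satisfying $a_n \ll e^{\gamma' (\log n)^{2}}$ for some $\gamma' > 0$, and that it is convex in the sense $a_{n+1} - a_n > a_n - a_{n-1}$ from some point on.

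The growth bound is immediate from $\beta \leq 2$: for $n \geq e$ one has $(\log n)^{\beta} \leq (\log n)^{2}$, so $a_n \leq F(n) \leq e^{\gamma(\log n)^2}$. The real work is checking convexity of the floor sequence. I would compute
\[
F'(x) = F(x)\cdot \frac{\gamma \beta (\log x)^{\beta-1}}{x},
\]
and differentiate once more to obtain
\[
F''(x) = \frac{F(x)}{x^{2}}\Bigl[\gamma^{2}\beta^{2}(\log x)^{2\beta-2} + \gamma\beta(\beta-1)(\log x)^{\beta-2} - \gamma\beta(\log x)^{\beta-1}\Bigr].
\]
Because $\beta > 1$, the bracketed expression is dominated by the positive $(\log x)^{2\beta-2}$ term for large $x$, while the prefactor $F(x)/x^{2} = \exp\bigl(\gamma (\log x)^{\beta} - 2\log x\bigr)$ tends to $\infty$ (the superlinear growth of $(\log x)^{\beta}$ beats $2\log x$ as soon as $\beta > 1$). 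Hence $F''(x) \to \infty$.

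The continuous second difference
\[
F(n+1) - 2F(n) + F(n-1) \;=\; \int_{0}^{1}\!\int_{0}^{1} F''(n-1+s+t)\,ds\,dt
\]
therefore also tends to infinity. Passing to floors perturbs this second difference by an amount bounded by $2$ in absolute value, so for all sufficiently large $n$ one obtains $a_{n+1} - 2a_n + a_{n-1} \geq 1$; strict monotonicity follows analogously from $F'(x) \to \infty$. Redefining or discarding the finitely many initial terms on which these properties fail changes the star-discrepancy by only $O(n_{0}/N)$, which is negligible compared to $N^{7/26-\varepsilon}$, so Theorem~\ref{th_b} applies to the resulting sequence and delivers the claim.

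I do not foresee any real obstacle: the only substantive computation is showing $F''(x) \to \infty$, which is elementary calculus, after which the result is a direct consequence of Theorem~\ref{th_b}.
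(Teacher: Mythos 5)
Your proposal is correct and follows exactly the route the paper intends: the paper derives Corollary~\ref{co_c} "immediately" from Theorem~\ref{th_b}, and your verification of the growth bound and of eventual convexity of $\lfloor e^{\gamma(\log n)^{\beta}}\rfloor$ (plus discarding finitely many initial terms) supplies the details the paper leaves implicit.
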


The following result was already given in Theorem~1 in \cite{ALMathe}. Since we found that there is an inaccuracy in the proof of Theorem~1 in \cite{ALMathe}, we formulate this result here again as Corollary~\ref{co_d}, and we give an alternative proof.\\

\begin{corollary} \label{co_d}
Let $P \in \mathbb{Z} \left[x\right]$ be a polynomial of degree $d \geq 2$ and
let $\left(m_{n}\right)_{n \geq 1}$ be an arbitrary sequence of pairwise
different integers with $\left|m_{n}\right| \leq n^{t}$ for some $t \in
\mathbb{N}$ and all $n \geq n(t)$. Then for the discrepancy $D_{N}$ of the
sequence $\left(\left\{P\left(m_{n}\right)\alpha \right\}\right)_{n \geq 1}$ we
have for almost all $\alpha$
$$
ND_{N} \geq N^{\frac{1}{2}-\varepsilon}
$$
for all $\varepsilon >0$ and for infinitely many $N$.
\end{corollary}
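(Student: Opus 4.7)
The plan is to mimic the proof of Theorem~\ref{th_a}, applied to the integer sequence $(P(m_n))_{n \geq 1}$ in place of a strictly increasing sequence $(a_n)$. That proof ultimately uses $(a_n)$ only through the second and fourth $L^2(d\alpha)$ moments of the Weyl sums $S_{h, N}(\alpha) = \sum_{n=1}^N \exp(2\pi i h P(m_n) \alpha)$, both of which are symmetric in the index $n$ and therefore insensitive to whether the underlying sequence is strictly increasing or has a few repetitions. Only two inputs are needed: the growth bound $|P(m_n)| \ll n^{dt} \ll e^{\gamma(\log n)^2}$, immediate from $|m_n| \leq n^t$ and $\deg P = d$, and the additive-energy bound
$$E\bigl(\{P(m_1), \ldots, P(m_N)\}\bigr) \ll_\varepsilon N^{2+\varepsilon}.$$

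To prove the energy bound I would factor $P(u) - P(v) = (u - v) Q(u, v)$ with $Q \in \mathbb{Z}[u, v]$ of total degree $d - 1$. For every nonzero integer $D$, any solution $(u, v) \in \mathbb{Z}^2$ of $P(u) - P(v) = D$ forces $k := u - v$ to divide $D$, and then $u$ satisfies the univariate polynomial equation $Q(u, u - k) = D / k$ of degree $d-1$, with at most $d - 1$ integer roots. Since $|D| \leq 2 \max_{n \leq N} |P(m_n)| \ll N^{dt}$, the divisor bound gives $\tau(|D|) \ll_\varepsilon N^\varepsilon$, so the representation function $r(D)$ is $O(N^\varepsilon)$ for every $D \neq 0$, while $r(0) \leq dN$ because $P$ is at most $d$-to-one on $\mathbb{Z}$. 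Summing,
$$E = \sum_D r(D)^2 \leq r(0)^2 + \max_{D \neq 0} r(D) \cdot \sum_{D \neq 0} r(D) \ll N^2 + N^\varepsilon \cdot N^2 \ll N^{2+\varepsilon}.$$

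With these two inputs, the argument of Theorem~\ref{th_a} applied with $\kappa = 2 + \varepsilon$ yields $N D_N^* = \Omega(N^{1/2 - \varepsilon})$ for almost every $\alpha$ and infinitely many $N$, whence the same bound holds for $N D_N$. Two cosmetic adjustments are needed: the second moment $\int_0^1 |S_{h, N}(\alpha)|^2 d\alpha$ here lies in $[N, dN]$ rather than equaling $N$ (affecting only constants), and for $P$ of odd degree one first splits $(m_n)$ by the sign of $P(m_n)$, applies the argument to the infinite subsequence of one sign, and uses $\{-x\} = 1 - \{x\}$ to handle the reflected copy. The main obstacle is the additive-energy estimate itself; in particular the divisor argument fails precisely when $d = 1$, since then $P(u) - P(v) = u - v$ takes arbitrary values and $r(D)$ is only bounded by $|S|$, explaining the hypothesis $d \geq 2$.
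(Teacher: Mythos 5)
Your proposal is correct and follows essentially the same route as the paper's own proof: bound the additive energy of $\{P(m_1),\dots,P(m_N)\}$ by $O(N^{2+\varepsilon})$ via the factorization $P(u)-P(v)=(u-v)Q(u,v)$ together with the divisor bound, and then invoke Theorem~\ref{th_a}. Your treatment is in fact a bit more careful than the paper's (explicitly handling the $D=0$ term and the non-monotonicity and sign issues, which the paper glosses over since the underlying result from \cite{ALMathe} does not actually require monotonicity or positivity), but the argument is the same.
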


\begin{proof}[Proof of Corollary~\ref{co_d}]
Let $f(n):= P\left(m_{n}\right)$. For sufficiently large $N$ we have 
\begin{eqnarray*}
E\left(\left\{f(n) \left|\right.n=1, \ldots, N\right\}\right) & = & \sum_{\underset{f(k) -f(l) = f(m)-f(n)}{1 \leq k, l, m, n \leq N}} \\
& \leq & \sum_{a} \left(\tilde{A}_{f} (a)\right)^2.
\end{eqnarray*}
Here $\tilde{A}_{f} (a) := \left\{\left(x,y\right) \in \mathbb{N} \times \mathbb{N} \left| \right. f(x) -f(y) = a\right\}$, and the summation in the last sum is extended over all integers $a \in [-c_{P} N^{dt},c_{P} N^{dt}]$ such that there exist $k,l$ with $1 \leq k, l\leq N$ and $f(k) -f(l) =a$, where $c_{P}$ is an appropriate constant depending only on $P$. Note that the number of such $a$ is at most $N^{2}$. Hence by Theorem~\ref{th_a} it suffices to show that $\tilde{A}_{f} (a) = \mathcal{O} \left(\left|a\right|^{\varepsilon}\right)$ for all $\varepsilon > 0$.\\

Let us first assume that $m_{n} =n$. Since $f$ is of degree $d \geq 2$, there exists a non-constant $q \in \mathbb{Z}\left[x,y\right]$ such that $f(x) -f(y) = (x-y) q(x,y)$. So, if $f(x) -f(y) = a$ for some non-zero integer $a$, it follows that $x-y$ is a divisor $t$ of $a$, and that hence $q\left(x, x-t\right) = \frac{a}{t}$. This last equation has at most $d-1$ solutions $x$. Since $a$ has $\mathcal{O}\left(\left|a\right|^{\varepsilon}\right)$ divisors $t$ for all $\varepsilon > 0$, the assertion $\tilde{A}_{f} (a) = \mathcal{O}\left(\left|a\right|^{\varepsilon}\right)$ follows.\\

For arbitrary $\left(m_{n}\right)$ the result follows trivially from the special
result for $m_{n} =n$.
\end{proof}

As already mentioned above, for the additive energy $E(A)$ of a finite set $A$ of distinct integers we always have $\left|A\right|^{2} \leq E(A) \leq \left|A\right|^{3}$, and for every strictly increasing sequence $\left(a_{n}\right)_{n \geq 1}$ of positive integers for almost all $\alpha$ the order of the discrepancy of $\left(\left\{a_{n} \alpha\right\}\right)_{n \geq 1}$ essentially is between $N^{\varepsilon}$ and $N^{\frac{1}{2}}$. The quintessence of Theorem~\ref{th_a} is that a small order of the additive energy of $\left\{a_{1}, \ldots, a_{N}\right\}$ for all $N$ implies a large metric order of $ND_{N}^{*}$ for $\left(\left\{a_{n} \alpha\right\}\right)_{n \geq 1}$. In particular, the lowest possible order of the additive energy of $\left\{a_{1}, \ldots, a_{N}\right\}$ for all $N$ implies the largest possible metric order of $ND_{N}^{*}$ of $\left(\left\{a_{n} \alpha\right\}\right)_{n \geq 1}$. \\

It is tempting to ask whether the converse statement also is true, that is whether a large order of the additive energy of $\left\{a_{1}, \ldots, a_{N}\right\}$ for all $N$ necessarily implies a small metric order of $ND_{N}^{*}$ of $\left(\left\{a_{n} \alpha\right\}\right)_{n \geq 1}$. This hypothesis seems to be supported by the pure Kronecker sequence $\left(\left\{n \alpha\right\}\right)_{n \geq 1}$, i.e., $a_{n} = n$. The additive energy in this case satisfies $E\left(\left\{a_{1}, \ldots, a_{N}\right\}\right) \gg N^{3}$, so it is of the largest possible order, and $ND_{N}^{*} = \mathcal{O} \left(N^{\varepsilon}\right)$ for all $\varepsilon >0$ for almost all $\alpha$, which means that the discrepancy is of the lowest possible order.\\

However, the hypothesis is not true, as the example given in Theorem~\ref{th_d} below shows. There we present a sequence $\left(a_{n}\right)_{n \geq 1}$ which has both the largest possible order of the additive energy as well as the largest possible metric order of the discrepancy for $\left(\left\{a_{n} \alpha\right\}\right)_{n \geq 1}$. This sequence $\left(a_{n}\right)_{n \geq 1}$ is characterized by the Rudin--Shapiro sequence. The Rudin--Shapiro sequence $\left(r_{0}, r_{1}, r_{2}, \ldots\right) = \left(1,1,1,-1,1,1,-1,1, \ldots\right)$ is defined by

\[r_{k}=\begin{cases}
1 & \text{if the number of}~11\text{-blocks in the base}~2~\text{representation of}~k~\text{is even}\\
-1 & \text{otherwise.} \\
\end{cases}\]

Let the sequence $\left(a_{n}\right)_{n \geq 1} = \left(0,1,2,4,5,7, \ldots\right)$ be the sequence of those indices $k$ for which $r_k=1$ in the Rudin--Shapiro sequence, sorted in increasing order. We will call this sequence the sequence of \emph{Rudin--Shapiro integers}. By construction this sequence is strictly increasing and we have $a_{n} \leq 2 n$ for all $n$.

\begin{theorem} \label{th_d}
Let $\left(a_{n}\right)_{n \geq 1}$ be the sequence of Rudin--Shapiro integers. Then the additive energy of $\left\{a_{1}, \ldots ,a_{N}\right\}$ is of maximal possible order, i.e., 
$$
E\left(\left\{a_{1}, \ldots, a_{N}\right\}\right) \gg N^{3},
$$
and for almost all $\alpha$ the discrepancy of $\left(\left\{a_{n} \alpha\right\}\right)_{n \geq 1}$ is also essentially of maximal possible order, i.e.,
$$
ND_{N}^{*} = \Omega \left(N^{\frac{1}{2}-\varepsilon}\right)
$$
for all $\varepsilon > 0$.
\end{theorem}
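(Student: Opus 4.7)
I plan to use a soft density argument. The classical Rudin--Shapiro $L^{\infty}$ bound $\left|\sum_{k=0}^{M-1} r_k e^{2\pi i k \theta}\right| \ll \sqrt{M}$, uniform in $\theta\in\R$, evaluated at $\theta = 0$ yields $\#\{k<M : r_k = 1\} = M/2 + O(\sqrt M)$ and hence $a_N = 2N + O(\sqrt N)$. Thus $A_N := \{a_1,\ldots,a_N\}$ has cardinality $N$ and sits in an interval of length $O(N)$, so its difference set $A_N - A_N$ also has cardinality $O(N)$. Cauchy--Schwarz applied to the representation function $r_{A_N}(d) := \#\{(x,y) \in A_N^2 : x - y = d\}$, with $\sum_d r_{A_N}(d) = N^2$, then gives
\[
E(A_N) \;=\; \sum_d r_{A_N}(d)^2 \;\geq\; \frac{N^4}{|A_N-A_N|} \;\gg\; N^3,
\]
matching the trivial upper bound $E(A_N)\leq N^3$.

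\textbf{Part 2: the discrepancy lower bound.} Since $E(A_N)\asymp N^3$, Theorem~\ref{th_a} is vacuous and a direct argument is needed. My plan is a second-moment / Paley--Zygmund bound on the first Fourier coefficient of the empirical measure, promoted to an almost-everywhere statement by a Borel--Cantelli-type step. Setting $\hat\mu_N(h) := \tfrac1N\sum_{n=1}^N e^{2\pi i h a_n\alpha}$, orthogonality gives $\int_0^1 |\hat\mu_N(h)|^2\, d\alpha = 1/N$ and $\int_0^1 |\hat\mu_N(h)|^4\, d\alpha = E(A_N)/N^4 \ll 1/N$. Paley--Zygmund applied to $|\hat\mu_N(1)|^2$ therefore shows the set $\mathcal{E}_N := \{\alpha : |\hat\mu_N(1)| \geq c/\sqrt N\}$ has Lebesgue measure $\gg 1/N$, and on $\mathcal{E}_N$ the Koksma--Hlawka inequality $D_N^* \geq |\hat\mu_N(1)|/(2\pi)$ gives $N D_N^* \gg \sqrt N$. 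To upgrade from ``positive measure'' to ``almost every $\alpha$, infinitely often'', I would apply the Kochen--Stone form of Borel--Cantelli along a subsequence $(N_j)$ with $\sum_j \mu(\mathcal{E}_{N_j}) = \infty$, which requires controlling the pairwise correlations $\int_0^1 |\hat\mu_{N_i}(1)|^2 |\hat\mu_{N_j}(1)|^2\, d\alpha$ for $i\neq j$. Expanding, this is, up to the factor $N_i^{-2}N_j^{-2}$, a mixed count of additive quadruples in $A_{N_i}\times A_{N_i}\times A_{N_j}\times A_{N_j}$; the density/Cauchy--Schwarz philosophy of Part~1 applied to $A_{N_i}\subseteq A_{N_j}$ should deliver the near-independent bound $\ll (N_i N_j)^{-1}$.

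\textbf{Main obstacle.} The principal technical difficulty is this mixed pairwise-correlation estimate: because the single-scale fourth-moment bound is tight (the energy $E(A_N)\asymp N^3$ leaves no slack), the cross-scale count has to be proved essentially optimally. Once Kochen--Stone yields a limsup event of positive measure, a Cassels--Gallagher-style zero-one argument, combined with a dyadic refinement of the subsequence $(N_j)$, absorbs the gap between the Paley--Zygmund threshold $\sqrt N$ and the desired $N^{1/2-\varepsilon}$, delivering the a.e.\ lower bound asserted in Theorem~\ref{th_d}.
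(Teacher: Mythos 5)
Your Part 1 is correct and is essentially the paper's argument: since $a_k\le 2k$, the set $A_N$ sits in an interval of length $O(N)$, so $|A_N-A_N|=O(N)$ and Cauchy--Schwarz on the representation function gives $E(A_N)\ge N^4/|A_N-A_N|\gg N^3$. (The paper dispenses with this in one line from the Brillhart--Morton bound $\sum_{k<l}r_k<\sqrt{6l}$.)

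Part 2 has a genuine gap, located exactly at the step you defer. The moment inputs you use, $\int_0^1|S_N|^2\,d\alpha=N$ and $\int_0^1|S_N|^4\,d\alpha=E(A_N)\le N^3$, hold for \emph{every} strictly increasing integer sequence, so Paley--Zygmund gives a level set $\mathcal{E}_N=\{\alpha:|S_N(\alpha)|\ge c\sqrt N\}$ of measure $\gg N^2/E(A_N)\gg 1/N$ for every such sequence --- including $a_n=n$, for which the asserted conclusion $ND_N^*=\Omega(N^{1/2-\varepsilon})$ a.e.\ is false. Hence all sequence-specific content would have to come from the cross-correlation estimates you postpone, and the proposed Kochen--Stone route cannot deliver them: along a sparse subsequence $\sum_j\mu(\mathcal{E}_{N_j})\asymp\sum_j N_j^{-1}$ converges, so Kochen--Stone is inapplicable; along a dense subsequence the sum diverges only logarithmically while neighbouring events with $N'\le 2N$ are nearly identical (since $S_{N'}=S_N+O(N'-N)$), so the pairwise-intersection sum dominates and the Kochen--Stone ratio tends to $0$. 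Moreover the large fourth moment ($E(A_N)\asymp N^3\gg(\int|S_N|^2)^2$) signals exactly the heavy-tailed, strongly correlated behaviour that defeats near-independence. Finally, even granted a limsup set of positive measure, no zero--one law applies: $\mathcal{E}_N$ is not a tail event for any ergodic circle map, and the Cassels/Gallagher lemmas govern limsup sets built from orbits $\{h\alpha\}$, which is not the structure you have.

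The paper's proof supplies precisely the two ingredients your outline lacks. First, instead of Paley--Zygmund it proves a genuine $L^1$ lower bound $\int_0^1\bigl|\sum_{k\le\sigma(n)}e^{2\pi i a_k\alpha}\bigr|\,d\alpha\gg 2^{n/2}$, via the identity $\sum_{k\le\sigma(n)}e^{2\pi i a_k\alpha}=\tfrac12\bigl(\rho_n(e^{2\pi i\alpha})+\sum_{k<2^n}e^{2\pi i k\alpha}\bigr)$, Erd\'elyi's theorem $\|\rho_n\|_{1}\gg 2^{n/2}$ for the Rudin--Shapiro polynomials, and the $O(n)$ bound for the Dirichlet kernel; this is where the specific structure of the Rudin--Shapiro integers enters, and it produces level sets far larger than $1/N$. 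Second, the passage to almost every $\alpha$ is not a Borel--Cantelli argument on the sets $\mathcal{E}_N$ themselves but the dilation mechanism of Theorem~3/4 of \cite{ALMathe}: for a.e.\ $\alpha$ infinitely many dilates $\{h\alpha\}$ with controlled $h$ land in the good set, and the resulting frequency-$h$ Weyl sum is converted into a discrepancy bound; this is also the source of the $\varepsilon$-loss and of the growth hypothesis on $(a_n)$. Without analogues of both ingredients your sketch cannot be completed.
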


This result should be compared to another example which was given in~\cite{AHL}. There the sequence $\left(a_{n}\right)_{n \geq 1}$ of Thue--Morse integers (also called \emph{evil numbers}) was studied. This sequence also has additive energy of maximal possible order, and in this case for almost all $\alpha$ for the discrepancy of $\left(\left\{a_{n}\alpha\right\}\right)_{n\geq1}$ we have $ND_{N}^{*} = \mathcal{O} \left(N^{0.4035}\right)$ and $ND_{N}^{*} = \Omega \left(N^{0.4033}\right)$.\\

Further examples of sequences $\left(a_{n}\right)_{n \geq 1}$ with highest possible additive energy and the essentially maximal metric order of $ND_{N}^{*}$ of $\left(\left\{a_{n} \alpha \right\}\right)_{n\geq1}$ can be deduced from the results in \cite{AF1,AF2,fukuni}, where it may be necessary to modify the sequences constructed there by inserting long stretches of arithmetic progressions in order to maximize the additive energy. However, the examples given in these papers are randomly generated sequences and no explicit constructions are known, whereas the example in Theorem~\ref{th_d} above is fully explicit.\\

It remains to prove Theorems~\ref{th_a} and~\ref{th_d}. These proofs will be given in the next section.

\section{The proofs of Theorem~\ref{th_a} and of Theorem~\ref{th_d}} \label{sect_2}

\begin{proof}[Proof of Theorem~\ref{th_a}]
For a strictly increasing sequence $\left(a_{n}\right)_{n \geq 1}$ of positive integers we set $I(N) := \int^{1}_{0} \left|\sum^{N}_{n=1} e^{2 \pi i a_{n} \alpha}\right| d \alpha$, and we write $E\left(A_{N}\right)$ for the additive energy of $A_{N} := \left\{a_{1}, \ldots, a_{N}\right\}$. Note that by orthogonality we have $E(A_N) = \int^{1}_{0} \left|\sum^{N}_{n=1} e^{2 \pi i a_{n} \alpha}\right|^4 d \alpha$. By a classical trick, which is based on a clever application of H\"older's inequality, we have $I(N) \geq \left(\frac{N^{3}}{E\left(A_{N}\right)}\right)^{\frac{1}{2}}$ (see for example~\cite[Theorem 1]{Kar}). Hence, if $E\left(A_{N}\right) \ll N^{\kappa},$ then
\begin{equation} \label{equ_a}
I(N) \gg N^{\frac{3-\kappa}{2}}.
\end{equation}
In~\cite[Theorem~3]{ALMathe} the following result~(*) was shown:\\

\textit{Let $\left(a_{n}\right)_{n \geq 1}$ be a sequence of integers such that for some $t \in \mathbb{N}$ we have $\left|a_{n}\right| \leq n^{t}$ for all $n$ large enough. Assume there exist a number $\tau \in \left(0,1\right)$ and a strictly increasing sequence $\left(B_{L}\right)_{L \geq 1}$ of positive integers with $\left(B'\right)^{L} \leq B_{L} \leq B^{L}$ for some reals $B', B$ with $1 < B' < B$, such that for all $\varepsilon >0$ and all $L > L(\varepsilon)$ we have $I\left(B_{L}\right) > B^{\tau-\varepsilon}_{L}$. Then for almost all $\alpha \in \left[\left. 0,1\right.\right)$ for all $\varepsilon >0$ for the discrepancy $D_{N}^{*}$ of the sequence $\left(\left\{a_{n} \alpha\right\}\right)_{n \geq 1}$ we have $ND_{N}^{*} = \Omega \left(N^{\tau-\varepsilon}\right)$.}\\

The proof of this result was based on a further result~(**), which is stated as Theorem~4 in~\cite{ALMathe}:\\

\textit{Let $\left(R_{L}\right)_{L \geq 0}$ be a sequence of measurable subsets of $\left[\left. 0,1\right.\right)$, with the measure $\mathbb{P}\left(R_{L}\right)$ of $R_{L}$ satisfying $\mathbb{P}\left(R_{L}\right) \geq \frac{1}{B^{L}}$ for some constant $B>0$, and such that each $R_{L}$ is the disjoint union of at most $A^{L}$ intervals for some $A>0$. Then for almost all $\alpha \in \left[\left. 0,1\right.\right)$ for every $\eta >0$ there are infinitely many integers $h_{L}$ with $h_{L} \leq \left(1+\eta\right)^{L} \frac{1}{\mathbb{P} \left(R_{L}\right)}$ and $\left\{h_{L} \alpha\right\} \in R_{L}$.}\\

It turns out that this last result~(**) is also true in a stronger version, namely under the weaker assumption that $R_{L}$ is the disjoint union of at most $A^{L^{2}}$ intervals for some $A > 0$. This can easily be seen by following the proof of Theorem~\ref{th_d} in~\cite{ALMathe} line by line, replacing $A^{L}$ by $A^{L^{2}}$ and choosing the value $G_{L}$ which appears in the proof as $G_{L} := A^{2 L^{2}} \left(B\left(1+ \eta\right)\right)^{2 L}$ instead of $G_{L} = \left(AB \left(1+ \eta\right)\right)^{2 L}$.\\

From this a stronger version of~(*) follows, namely the fact that the conclusion of (*) also holds under the weaker assumption that $a_{n} < e^{\gamma\left(\log n\right)^{2}}$ for some $\gamma > 0$. This can also be easily seen by following the proof of Theorem~\ref{th_c} in~\cite{ALMathe} line by line. We just have to change formula (18) in this proof in~\cite{ALMathe} to
\begin{eqnarray*}
\left|f_{L} \left(\alpha_{1}\right) - f_{L} \left(\alpha_{2}\right)\right| & \leq & 2 \pi B_{L} e^{\gamma \left(\log B_{L}\right)^{2}}\\
& \leq & 2 \pi B^{L} e^{\gamma L^{2} \left(\log B\right)^{2}} \\
& \ll & A^{L^{2}}
\end{eqnarray*}
for some constant $A > 1$. As a consequence the function $g_{L}$ appearing in the proof can be written as a sum of $\ll A^{L^{2}}$ indicator functions of disjoint intervals, and hence the set $M_{L}^{\left(i_{L}\right)}$ appearing in the proof is always a union of $\ll A^{L^{2}}$ intervals. Then the stronger version of~(**) which we have obtained above is used to establish the stronger version of~(*). The desired result then follows immediately from this stronger version of~(*) together with~\eqref{equ_a}.
\end{proof}

\begin{proof}[Proof of Theorem~\ref{th_d}]
Let $\rho_{n} (x) := \sum^{2^{n}-1}_{k=0} r_{k} x^{k}$ be the Rudin--Shapiro polynomials. From Theorem 2.1 in~\cite{Erd.} it follows that $\int^{1}_{0} \left|\rho_{n} \left(e^{2 \pi i \alpha}\right)\right| d\alpha \gg 2^{\frac{n}{2}}$ for all $n$. Let $\sum (n)~:=~\sum^{\sigma(n)}_{k=0} e^{2 \pi i a_{k} \alpha}$, where $\sigma(n) := \#\left\{0 \leq k < 2^{n} \left| \right. r_{k} = 1\right\}.$ Then
\begin{equation} \label{equ_b}
\sum (n) = \frac{1}{2} \left(\rho_{n} \left(e^{2 \pi i \alpha}\right)+ \sum^{2^{n}-1}_{k=0} e^{2 \pi i k \alpha}\right).
\end{equation}
We have
\begin{eqnarray*}
\int^{1}_{0} \left|\sum^{2^{n}-1}_{k=0} e^{2 \pi i k \alpha}\right| d \alpha & \leq & \int^{1}_{0} \min \left(2^{n}, \frac{1}{\left\|\alpha\right\|}\right) d \alpha \\
& \leq & 2 + 2 \int^{\frac{1}{2}}_{\frac{1}{2^{n}}} \frac{1}{\alpha} d \alpha \\
& \leq & 2 + 2n.
\end{eqnarray*}
Hence
\begin{equation} \label{equ_c}
\int^{1}_{0} \left|\sum(n)\right|d \alpha \gg \int^{1}_{0} \left|\rho_{n} \left(e^{2 \pi i \alpha}\right)\right| d\alpha - \int^{1}_{0} \left|\sum^{2^{n}-1}_{k=0} e^{2 \pi i k \alpha}\right| d \alpha \gg 2^{\frac{n}{2}}.
\end{equation}
It is also well known (see for example~\cite{Br+Mo}) that we always have $\sqrt{\frac{3l}{5}} < \sum^{l-1}_{k=0} r_{k} < \sqrt{6 l}$ and therefore
\begin{equation} \label{equ_d}
a_{k} \leq 2k
\end{equation}
for all $k$. From~\eqref{equ_d} we immediately obtain $E\left(\left\{a_{1}, \ldots, a_{N}\right\}\right) \gg N^{3}$, and from~\eqref{equ_c} and~\eqref{equ_d} and using Theorem~\ref{th_c} in~\cite{ALMathe} we obtain the desired $\Omega$-estimate for $ND_{N}^{*}$.
\end{proof}

\end{document}